\documentclass[12pt]{article}
\usepackage[centertags]{amsmath}
\usepackage{amsfonts}
\usepackage{amssymb}
\usepackage{latexsym}
\usepackage{amsthm}
\usepackage{newlfont}
\usepackage{graphicx}
\usepackage{listings}
\usepackage{booktabs}
\usepackage{abstract}
\usepackage{enumerate}
\RequirePackage{srcltx}
\lstset{numbers=none,language=MATLAB}
\setcounter{page}{1}
\date{}

\bibliographystyle{amsplain}

\newlength{\defbaselineskip}
\setlength{\defbaselineskip}{\baselineskip}
\newcommand{\setlinespacing}[1]%
           {\setlength{\baselineskip}{#1 \defbaselineskip}}

\newcommand{\N}{{\mathbb{N}}}

\newcommand{\actaqed}{\hfill $\actabox$}
{\medskip\noindent \textit{Proof of #1. }}%
{\actaqed \medskip}

\def\cA{{\mathcal A}}

\def\cF{{\mathcal F}}

\def\cH{{\mathcal H}}
\def \Tr{\mathcal T}

\def \cN{\mathcal N}

\def \cM{\mathcal M}
\def\R{{\mathbb R}}
\def\Z{\mathbb Z}

\def \T{\mathbb T}
\def\bP{\mathbb P}
\def\bE{\mathbb E}

\def \<{\langle}
\def\>{\rangle}

\def \ep{\epsilon}
\def \e{\varepsilon}

\def \ff{\varphi}

\def\bx{\mathbf x}

\def\bk{\mathbf k}

\def\bs{\mathbf s}

\newtheorem{Theorem}{Theorem}[section]
\newtheorem{Lemma}{Lemma}[section]

\newtheorem{Remark}{Remark}[section]

\numberwithin{equation}{section}

\newcommand{\be}{\begin{equation}}
\newcommand{\ee}{\end{equation}}


\begin{document}

\title{Sampling discretization of integral norms}
\author{ F. Dai, \, A. Prymak,\, A. Shadrin, \\ V. Temlyakov, \, and  \, S. Tikhonov  }

\newcommand{\Addresses}{{
  \bigskip
  \footnotesize

  F.~Dai, \textsc{ Department of Mathematical and Statistical Sciences\\
University of Alberta\\ Edmonton, Alberta T6G 2G1, Canada\\
E-mail:} \texttt{fdai@ualberta.ca }

  \medskip
 A.~Prymak, \textsc{ Department of Mathematics\\
University of Manitoba\\ Winnipeg, MB, R3T 2N2, Canada
  \\
E-mail:} \texttt{Andriy.Prymak@umanitoba.ca }

\medskip
A.~Shadrin, \textsc{Department of Mathematics and Theoretical Physics\\
University of Cambridge\\Wilberforce Road, Cambridge CB3 0WA, UK
\\
E-mail:} \texttt{a.shadrin@damtp.cam.ac.uk}

    \medskip
  V.N. Temlyakov, \textsc{University of South Carolina,\\ Steklov Institute of Mathematics,\\ and Lomonosov Moscow State University
  \\
E-mail:} \texttt{temlyak@math.sc.edu}

  \medskip

  S.~Tikhonov, \textsc{Centre de Recerca Matem\`{a}tica\\
Campus de Bellaterra, Edifici C
08193 Bellaterra (Barcelona), Spain;\\
ICREA, Pg. Llu\'{i}s Companys 23, 08010 Barcelona, Spain,\\
 and Universitat Aut\`{o}noma de Barcelona\\
E-mail:} \texttt{stikhonov@crm.cat}

}}

\maketitle
\begin{abstract}
{The paper is devoted to discretization of integral norms of functions from
a given finite dimensional subspace. Even though this problem is extremely important in applications, its systematic study has begun recently. In this paper we obtain a conditional theorem for all integral norms $L_q$, $1\le q<\infty$, which is an extension of known results for $q=1$.
To discretize  the integral norms successfully, we introduce a  new technique, which is
  a combination of probabilistic technique with results on the entropy numbers in the uniform norm.
  As an application of the general conditional theorem, we derive a new Marcinkiewicz type
   discretization for  the multivariate trigonometric polynomials with frequencies from the hyperbolic crosses.
  }
\end{abstract}

\section{Introduction}
\label{I}

 As it is clear from the title the two main concepts of the paper are {\it discretization} and {\it integral norms}.
Let $\Omega$ be a compact subset of $\R^d$ with the probability measure $\mu$. By $L_q$, $1\le q< \infty$, norm we understand
$$
\|f\|_q := \left(\int_\Omega |f|^qd\mu\right)^{1/q}.
$$
By discretization of the $L_q$ norm we understand a replacement of the measure $\mu$ by
a discrete measure $\mu_m$ with support on a set $\xi =\{\xi^\nu\}_{\nu=1}^m \subset \Omega$. This means that integration with respect to measure $\mu$ we replace by an appropriate cubature formula. Thus integration is replaced by evaluation of a function $f$ at a
finite set $\xi$ of points. This is why we call this way of discretization {\it sampling discretization}.

Discretization is
a very important step in making a continuous problem computationally feasible.
A prominent  example of a classical discretization problem is the problem of metric entropy (covering numbers, entropy numbers). The reader can find fundamental general results on metric entropy in \cite[Ch.15]{LGM},  \cite[Ch.3]{Tbook}, \cite[Ch.7]{VTbookMA}, \cite{Carl},
\cite{Schu} and in the recent papers \cite{VT156} and  \cite{HPV}.
Bounds for the entropy numbers of function classes are important by themselves and also have important connections to other fundamental problems (see, for instance, \cite[Ch.3]{Tbook} and \cite[Ch.6]{DTU}).

Another prominent example of a discretization problem is the problem of numerical integration.  Numerical integration in the mixed smoothness classes requires deep number theoretical results for constructing optimal (in the sense of order) cubature formulas (see, for instance, \cite[Ch.8]{DTU}). A typical approach to solving a continuous problem numerically -- the Galerkin method --
suggests to look for an approximate solution from a given finite dimensional subspace. A standard way to measure an error of approximation is an appropriate $L_q$ norm, $1\le q\le\infty$. Thus, the problem of   discretization of the $L_q$ norms of functions from a given finite dimensional subspace arises in a very natural way. The first results in this direction were obtained by Marcinkiewicz and
by Marcinkiewicz-Zygmund (see \cite{Z}) for discretization of the $L_q$ norms of the univariate trigonometric polynomials in 1930s. This is why we call discretization results of this kind the Marcinkiewicz-type theorems. There are different ways to discretize: use coefficients from an expansion with respect to a basis, more generally, use
linear functionals. We discuss here the way which uses function values at a fixed finite set of points.
We now proceed to the detailed presentation.

{\bf Marcinkiewicz problem.} Let $\Omega$ be a compact subset of $\R^d$ with the probability measure $\mu$. We say that a linear subspace $X_N$ (index $N$ here, usually, stands for the dimension of $X_N$) of $L_q(\Omega)$, $1\le q < \infty$, admits the Marcinkiewicz-type discretization theorem with parameters $m\in \N$ and $q$ if there exist a set $$\Big\{\xi^\nu \in \Omega: \nu=1,\dots,m\Big\}$$ and two positive constants $C_1(d,q)$ and $C_2(d,q)$ such that for any $f\in X_N$ we have
\be\label{I.1}
C_1(d,q)\|f\|_q^q \le \frac{1}{m} \sum_{\nu=1}^m |f(\xi^\nu)|^q \le C_2(d,q)\|f\|_q^q.
\ee
In the case $q=\infty$ we define $L_\infty$ as the space of continuous functions on $\Omega$  and ask for
\be\label{I.2}
C_1(d)\|f\|_\infty \le \max_{1\le\nu\le m} |f(\xi^\nu)| \le  \|f\|_\infty.
\ee
We will also use the following brief way to express the above properties: the $\cM(m,q)$ theorem holds for  a subspace $X_N$, written $X_N \in \cM(m,q)$.

The most complete results on sampling discretization are obtained in the case $q=2$.  The problem is basically solved in the case of subspaces of trigonometric polynomials. By $Q$ we denote a finite subset of $\Z^d$, and $|Q|$ stands for the number of elements in $Q$. Let
$$
\Tr(Q):= \left\{f: f=\sum_{\bk\in Q}c_\bk e^{i(\bk,\bx)},\  \  c_{\bk}\in\mathbb{C}\right\}.
$$
In \cite{VT158} it was shown how to derive the following result from the
 recent paper by  S.~Nitzan, A.~Olevskii, and A.~Ulanovskii~\cite{NOU}, which in turn is based on the paper of A.~Marcus, D.A.~Spielman, and N.~Srivastava~\cite{MSS}.

\begin{Theorem}\label{NOUth} There are three positive absolute constants $C_1$, $C_2$, and $C_3$ with the following properties: For any $d\in \N$ and any $Q\subset \Z^d$   there exists a set of  $m \le C_1|Q| $ points $\xi^j\in \T^d$, $j=1,\dots,m$ such that for any $f\in \Tr(Q)$
we have
$$
C_2\|f\|_2^2 \le \frac{1}{m}\sum_{j=1}^m |f(\xi^j)|^2 \le C_3\|f\|_2^2.
$$
\end{Theorem}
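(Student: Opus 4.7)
The plan is to derive the theorem from the Kadison--Singer solution of Marcus, Spielman, and Srivastava (in Weaver's formulation), exploiting the fact that for the trigonometric system on $\T^d$ the reproducing kernel is constant on the diagonal. This is essentially the route taken by Nitzan, Olevskii, and Ulanovskii \cite{NOU} for this regular case.

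First I would reformulate the inequality as a spectral bound. Set $N := |Q|$ and let $v(\bx) \in \bbC^N$ be the column vector with entries $e^{i(\bk,\bx)}$, $\bk \in Q$. For $f = \sum_{\bk \in Q} c_\bk e^{i(\bk,\bx)}$ one has $\|f\|_2^2 = \|c\|_2^2$ and $|f(\xi)|^2 = c^* v(\xi) v(\xi)^* c$, so the desired Marcinkiewicz bound is equivalent to the operator inequality
\[
C_2\, I_N \;\preceq\; \frac{1}{m}\sum_{\nu=1}^{m} v(\xi^\nu) v(\xi^\nu)^* \;\preceq\; C_3\, I_N.
\]
Orthogonality of the exponentials gives $\int_{\T^d} v(\bx) v(\bx)^* \, d\bx = I_N$, and crucially $\|v(\bx)\|^2 \equiv N$ at every point $\bx \in \T^d$ --- the most favorable structural feature for the thinning below.

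Next I would oversample and then thin via MSS. Choose a product grid $\{\eta^1, \ldots, \eta^M\} \subset \T^d$ with $M \gg N$ (cardinality exceeding the coordinate diameter of $Q - Q$ in each direction) so that the exponentials are \emph{exactly} orthogonal on the grid, giving $M^{-1}\sum_{i=1}^{M} v(\eta^i) v(\eta^i)^* = I_N$. Setting $w_i := M^{-1/2} v(\eta^i)$, one has $\sum_i w_i w_i^* = I_N$ with $\|w_i\|^2 = N/M$ at our disposal. Weaver's MSS theorem then produces a partition $\{1,\ldots,M\} = S_1 \sqcup S_2$ with $\|\sum_{i \in S_j} w_i w_i^*\| \le 1/2 + \delta$ for $j=1,2$; since the two sums add to $I_N$, an upper bound on one side automatically forces a matching lower bound $(1/2 - \delta)I_N$ on the other. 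Renormalizing by $\sqrt{2}$ and iterating this balanced splitting $k \asymp \log_2(M/N)$ times extracts a piece $S$ of cardinality $|S| \le C_1 N$ carrying a two-sided spectral estimate $\alpha I_N \preceq \sum_{i \in S} w_i w_i^* \preceq \beta I_N$ with $\beta/\alpha$ bounded. Taking $\{\xi^\nu\} := \{\eta^i : i \in S\}$ and rescaling yields \eqref{I.1} with absolute constants.

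The main obstacle is tracking constants across the iteration. After $k$ splittings the ratio of upper to lower bound inflates to $\bigl((1/2+\delta)/(1/2-\delta)\bigr)^k$, which must remain universally bounded. This forces $\delta$ to be a small absolute constant, which in turn forces the sparsity $2^k N/M$ to stay below the MSS threshold $\theta(\delta)$; the admissible number of iterations is then $k = \log_2(\theta(\delta) M / N)$, yielding a terminal piece of size $M/2^k \asymp N/\theta(\delta) = O(N)$. Since $\delta$ and $\theta(\delta)$ are absolute, so are $C_1, C_2, C_3$, and note that the auxiliary $M$ may depend badly on $d$ but does not appear in the final bound. Once this bookkeeping is done, the derivation is complete.
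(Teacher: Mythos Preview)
Your proposal is correct and follows precisely the route the paper itself indicates: the paper does not supply an independent proof of this theorem but states that it is derived (in \cite{VT158}) from the result of Nitzan--Olevskii--Ulanovskii \cite{NOU}, which in turn rests on Marcus--Spielman--Srivastava \cite{MSS}. Your reformulation as a spectral bound, the initial exact oversampling on a product grid, and the iterated Weaver/MSS halving with constant tracking is exactly the NOU argument specialized to the flat-kernel case $\|v(\bx)\|^2\equiv N$, so there is nothing to add.
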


 Some results are obtained under an extra condition on the system $\{u_i(x)\}_{i=1}^N$.
 We will call it Condition E to be consistent with the prior work
 (see, e.g., \cite{DPTT}).
 \\
{\bf Condition E.} There exists a constant $t$ such that for all $x\in \Omega$
\be\label{ud5}
w(x):=\sum_{i=1}^N u_i(x)^2 \le Nt^2.
\ee
The reader can find the following result, which is a slight generalization of the Rudelson's \cite{Rud} celebrated result, in \cite{VT159}.

\begin{Theorem}\label{T5.4}  Let $\{u_i\}_{i=1}^N$ be a real  orthonormal system, satisfying condition~{\bf E}.
Then for every $\ep>0$ there exists a set $\{\xi^j\}_{j=1}^m \subset \Omega$ with
\[
m  \le C\frac{t^2}{\ep^2}N\log N
\]
such that for any $f=\sum_{i=1}^N c_iu_i$ we have
\[
(1-\ep)\|f\|_2^2 \le \frac{1}{m} \sum_{j=1}^m f(\xi^j)^2 \le (1+\ep)\|f\|_2^2.
\]
\end{Theorem}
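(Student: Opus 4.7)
The plan is a probabilistic construction by i.i.d.\ sampling combined with matrix concentration, following Rudelson's strategy. Draw $\xi^1,\ldots,\xi^m$ independently from $\mu$ and set $\phi(x):=(u_1(x),\ldots,u_N(x))^T\in\R^N$, $X_j:=\phi(\xi^j)\phi(\xi^j)^T$. Orthonormality of $\{u_i\}$ gives $\bE X_j=I_N$, while Condition E yields the deterministic pointwise bound $\|X_j\|=\|\phi(\xi^j)\|_2^2=w(\xi^j)\le Nt^2$, where $\|\cdot\|$ denotes the operator norm on $N\times N$ matrices. For $f=\sum_{i=1}^N c_iu_i$ one has $\|f\|_2^2=\|c\|_2^2$ and $m^{-1}\sum_{j=1}^m f(\xi^j)^2=c^T M_m c$ with $M_m:=m^{-1}\sum_j X_j$, so the required two-sided discretization is equivalent to the operator-norm estimate
$$\|M_m-I_N\|\le\ep.$$

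Next I would estimate $\bE\|M_m-I_N\|$. After a standard symmetrization that replaces $X_j-\bE X_j$ by $\e_jX_j$ with independent Rademacher $\e_j$, Rudelson's lemma (the noncommutative Khintchine inequality applied to rank-one summands) gives
$$\bE_\e\Big\|m^{-1}\sum_{j=1}^m\e_jX_j\Big\|\le C\sqrt{\frac{\log N}{m}}\,\Big(\max_{j\le m}\|X_j\|\Big)^{1/2}\|M_m\|^{1/2}.$$
Substituting $\|X_j\|\le Nt^2$ and $\|M_m\|\le 1+\|M_m-I_N\|$, then taking expectation in $\xi$, yields a self-bounding quadratic inequality for $A:=\bE\|M_m-I_N\|$ whose solution, as soon as the right-hand side is bounded by a constant, is
$$A\le C'\sqrt{\frac{t^2 N\log N}{m}}.$$

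Finally, choosing $m\ge C''\ep^{-2}t^2N\log N$ makes $A\le\ep/2$, and Markov's inequality applied to the non-negative random variable $\|M_m-I_N\|$ produces a deterministic realization $\{\xi^j\}_{j=1}^m$ satisfying $\|M_m-I_N\|\le\ep$. Transcribing this back in terms of $f=\sum c_iu_i$ gives exactly the asserted two-sided discretization inequality.

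The central difficulty is obtaining the sharp dependence $m\sim\ep^{-2}N\log N$. A naive $\ep$-net over the unit sphere in the $N$-dimensional coefficient space, combined with scalar Bernstein, picks up an extra power of $N$; the improvement to a single $\log N$ requires the noncommutative Khintchine/Rudelson inequality, which is the technical heart of the proof. A secondary subtlety is the self-bounding step, which avoids an additional $\sqrt{\log m}$ factor inherent in a direct application of matrix Bernstein and keeps the logarithm at $\log N$ rather than $\log m$.
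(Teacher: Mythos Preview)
The paper does not give its own proof of this theorem: it is quoted in the introduction as a known result, attributed to Rudelson \cite{Rud} in the form found in \cite{VT159}. Your proposal is exactly the Rudelson argument (i.i.d.\ sampling, reduction to an operator-norm deviation $\|M_m-I_N\|\le\ep$, symmetrization plus the noncommutative Khintchine/Rudelson lemma, the self-bounding quadratic, and extraction of a good realization), which is precisely how the cited proof runs; so your approach coincides with the intended one and is correct.
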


Rather complete results on sampling discretization of the $L_1$ norm are obtained in \cite{VT159}. One of the main goals of this paper is to treat the case $1<q<\infty$.   In Section \ref{CondT} we present a generalization of discretization result from \cite{VT159}, which treats the case $q=1$, to the case of $1<q<\infty$. We give a detailed proof in Section \ref{CondT}.
We note that the case $q=2$ is much better developed both in the case of trigonometric polynomials and in the general case (see \cite{VT158}, \cite{VT159}, \cite{DPTT}, and the above discussion).
We prove in Section \ref{CondT} the following conditional result. Denote
$$
X^q_N := \{f\in X_N:\, \|f\|_q \le 1\}.
$$
For the definition of the entropy numbers $\e_k$ see Section \ref{CondT}.
\begin{Theorem}\label{T2.1} Let $1\le q<\infty$. Suppose that a subspace $X_N$ satisfies the condition
\be\label{I6}
\e_k(X^q_N,L_\infty) \le  B (N/k)^{1/q}, \quad 1\leq k\le N,
\ee
where $B\ge 1$.
Then for large enough constant $C(q)$ there exists a set of
$$
m \le C(q)NB^{q}(\log_2(2BN))^2
$$
 points $\xi^j\in \Omega$, $j=1,\dots,m$,   such that for any $f\in X_N$
we have
$$
\frac{1}{2}\|f\|_q^q \le \frac{1}{m}\sum_{j=1}^m |f(\xi^j)|^q \le \frac{3}{2}\|f\|_q^q.
$$
\end{Theorem}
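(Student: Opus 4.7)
I would use the probabilistic method combined with entropy chaining against $L_\infty$-covers coming from (\ref{I6}). Sample $\xi^1,\dots,\xi^m$ i.i.d.\ from $\mu$ and set
\[
\Delta(f):=\frac{1}{m}\sum_{\nu=1}^m|f(\xi^\nu)|^q - \|f\|_q^q.
\]
By $q$-homogeneity, the conclusion of the theorem is equivalent to $\sup_{f\in X_N^q}|\Delta(f)|\le 1/2$, so the goal is to show this holds with positive probability for $m$ of the claimed size. The $k=1$ instance of the hypothesis gives the a priori bound $\|f\|_\infty\le CBN^{1/q}$ on $X_N^q$, so $|f(\xi)|^q$ is uniformly bounded by $CB^qN$ with variance at most $\|f\|_\infty^q\|f\|_q^q\le CB^qN$; Bernstein's inequality then yields $\bP(|\Delta(f)|>1/4)\le 2\exp(-cm/(B^qN))$ for any individual $f$.

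\textbf{Chain and increments.} To make this uniform I would build dyadic $L_\infty$-nets $\cN_j\subset X_N^q$ with $|\cN_j|\le 2^{2^j}$ and mesh $\eta_j\le B(N/2^j)^{1/q}$ while $2^j\le N$, and extend past $2^j=N$ by the standard geometric entropy decay inside an $N$-dimensional body. Every $f\in X_N^q$ then admits a chain $f^{(j)}\in\cN_j$ with $\|f-f^{(j)}\|_\infty\le\eta_j$ and $f^{(j)}\to f$ in $L_\infty$. Transfer the chain from $L_\infty$ to $\Delta$ via the pointwise inequality
\[
\bigl||a|^q-|b|^q\bigr|\le q(|a|+|b|)^{q-1}|a-b|.
\]
Combined with the envelope $\|f^{(j)}\|_\infty\le 2CBN^{1/q}$, this controls the pointwise range of each link $|f^{(j+1)}(\xi)|^q-|f^{(j)}(\xi)|^q$ by $C_q(BN^{1/q})^{q-1}\eta_j$, and a H\"older step on $\bE[(|f|+|g|)^{q-1}|f-g|]$ then bounds its variance in terms of $(BN^{1/q})^{q-1}\eta_j\cdot\|f^{(j+1)}-f^{(j)}\|_q$. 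Apply Bernstein to each of the $\le 2^{2^{j+2}}$ possible chain links with deviation threshold $t_j\asymp (j+1)^{-2}$ and union-bound over $j$; the dominant constraint appears at scale $2^j\simeq N$, forcing $m\gtrsim NB^q\log(BN)$, and the second logarithm in the advertised bound comes from summing over the $O(\log(BN))$ chain levels.

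\textbf{Main obstacle.} The essential new difficulty relative to the $q=1$ case of \cite{VT159} is the failure of Lipschitz continuity for $t\mapsto|t|^q$ when $q>1$: telescoping $|f|^q$ along an $L_\infty$-chain unavoidably introduces the envelope $(|f|+|g|)^{q-1}$, which is not a priori bounded on $X_N^q$. Taming this envelope is precisely where probability and uniform-norm entropy must be combined---the $k=1$ endpoint of (\ref{I6}) must be invoked to supply the pointwise bound $\|f\|_\infty\le CBN^{1/q}$, and the use of Bernstein rather than Hoeffding is essential so that the increment variance scales with $\|f\|_q$ rather than $\|f\|_\infty$, which is exactly what allows the union bound over the doubly-exponential nets $|\cN_j|\le 2^{2^j}$ to survive at a sample size that is linear in $N$ up to logarithmic factors.
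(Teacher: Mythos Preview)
Your chaining sketch contains a real gap that prevents it from reaching the stated bound when $q>1$. At a chain level with $2^j\asymp N$ you have mesh $\eta_j\asymp B$, so the increment $|f^{(j+1)}|^q-|f^{(j)}|^q$ has pointwise range
\[
M_j\;\asymp\;(BN^{1/q})^{q-1}\eta_j\;=\;B^qN^{(q-1)/q},
\]
while the log-cardinality of the net at that level is $\asymp 2^j\asymp N$. In the Bernstein union bound the range term alone forces
\[
m\;\gtrsim\; \frac{2^j M_j}{t_j}\;\asymp\; B^q\,N\cdot N^{(q-1)/q}\cdot(\log N)^2
\;=\;B^q\,N^{(2q-1)/q}(\log N)^2,
\]
not $B^qN(\log N)^2$ as you assert. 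The variance estimate you propose does not help: bounding $\|f^{(j+1)}-f^{(j)}\|_q$ by $\eta_j$ (which is all the $L_\infty$-net gives) returns the same product $M_j\eta_j$ in the variance and leads to the same extra power $N^{(q-1)/q}$. The envelope $(BN^{1/q})^{q-1}$ that you correctly identify as the obstacle is global---it does not shrink along the chain---so telescoping in $L_\infty$ cannot recover the linear-in-$N$ sample size once $q>1$. Your arithmetic is right only at $q=1$, where the envelope disappears; this is precisely the case handled by chaining in~\cite{VT159}.

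The paper avoids this loss by abandoning chaining in favor of a \emph{sandwiching} (bracketing) construction. For each $f\in X_N^q$ it builds a piecewise constant majorant/minorant $h(f,\bx)=\sum_{j}(1+a)^j\chi_{D_j(f)}(\bx)$ from level sets $D_j(f)=\{|f|\approx(1+a)^j\}$, so that $C_1(a)h\le|f|\le C_2(a)h$ pointwise. Discretizing $\|f\|_q^q$ then reduces to discretizing each indicator term $\varphi_j=(1+a)^{qj}\chi_{D_j(f)}$ in $L_1$. The key gain is that the range of $\varphi_j$ is $(1+a)^{qj}$ while the number of possible sets $D_j(f)$ is controlled by a product of nets at scales $\ge a(1+a)^j$, giving $\ln|\cF_j^q|\lesssim NB^q(1+a)^{-qj}$. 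Hence the product
\[
(\text{range})\times(\text{log-cardinality})\;\asymp\;(1+a)^{qj}\cdot NB^q(1+a)^{-qj}\;=\;NB^q
\]
is \emph{constant} across levels, and the Bernstein-type Lemma~\ref{AL1} delivers $m\lesssim NB^q(\log(BN))^2$. This level-set decomposition, not a Lipschitz telescoping of $t\mapsto|t|^q$, is the missing idea.
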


\begin{Remark}\label{rem:nik}
 We remark that 
 the  assumption \eqref{I6} for $k=1$ implies the following Nikol'skii type inequality for $X_N$:
	\[
	\|f\|_\infty \le 4BN^{1/q}\|f\|_q\   \ \text{for any $f\in X_N$.}
	\]
This  is shown in the beginning of the proof of Theorem \ref{T2.1}.
\end{Remark}

\begin{Remark}
In the proof of Theorem \ref{T2.1} we use the technique, which we call {\textnormal{ sandwiching}}. This means that we find a mapping of the $X_N^q$ into a finite set $\{h(f,\bx)\}$ of
piecewise constant functions with a property $C_1(a) h(f,\bx) \le |f(\bx)| \le C_2(a) h(f,\bx)$, for a large set of $\bx$. The idea of {\it sandwiching} is related to the idea of {\it entropy with bracketing}, which is widely used in the empirical process theory (see \cite{vdG}). We realize the sandwiching idea in the form very close to the one used in the paper by E.S. Belinsky \cite{Bel}. In the case $q=1$ Theorem \ref{T2.1} was proved in \cite{VT159} with the help of a different technique -- the chaining technique.

\end{Remark}
Applications of Theorem~\ref{T2.1} and further discussions are given in Section~3.

This paper can be considered  a natural continuation of  the recent papers~\cite{VT158}, \cite{VT159} and~\cite{DPTT}.

\section{Conditional theorem for discretization in $L_q$}
\label{CondT}

We begin with the definition of the entropy numbers.
  Let $(X,\|\cdot\|)$ be a Banach space and let $B_X$ denote the unit ball of $X$ with the center at the origin. Denote by $B_X(f,r)$ a ball with center $f$ and radius $r$: $\{g\in X:\|f-g\|\le r\}$. For a compact set $A$ and a positive number $\e$ we define the covering number $N_\e(A)$
 as follows
$$
N_\e(A) := N_\e(A,X)
:=\min \Biggl\{n : \exists f_1,\dots,f_n, f_j\in A , A\subseteq \bigcup_{j=1}^n B_X(f_j,\e)\Biggr\}.
$$
The corresponding minimal $\e$-net is denoted by $\cN_\e(A,X)$. Thus, $N_\e(A,X)= |\cN_\e(A,X)|$.
It is convenient to consider along with the $\varepsilon$-entropy $$\cH_\e(A,X):= \log_2 N_\e(A,X)$$ the entropy numbers $\e_k(A,X)$ of the set $A$ in $X$:
\begin{align*}
\e_k(A,X)  :&=\inf \big\{\e>0: \cH_\e (A; X)\leq k\big\},\   \  k=1,2,\cdots.
\end{align*}
In our definition of $N_\e(A)$ and $\e_k(A,X)$ we require $f_j\in A$. In a standard definition of $N_\e(A)$ and $\e_k(A,X)$ this restriction is not imposed.
However, it is well known (see \cite[p.208]{Tbook}) that these characteristics may differ at most by a factor $2$.

{\bf Proof of Theorem \ref{T2.1}.} We begin with the proof of Remark~\ref{rem:nik}.
Without loss of generality assume that $f\in X_N$ and $\|f\|_q=1$. By our assumption $\e_1:= \e_1(X_N^q,L_\infty) \le BN^{1/q}$. Since $X_N^q$ is compact, it means that there exist two elements $f_1$ and $f_2$ in $X_N^q$ such that $X_N^q\subset B_{L_\infty}(f_1,\e_1)\cup B_{L_\infty}(f_2,\e_1)$. This implies that the zero element belongs to one of these balls, say, $B_{L_\infty}(f_1,\e_1)$  and, therefore, $\|f_1\|_\infty\le \e_1$.
Next, $(f_1+f_2)/2$ belongs to one of those balls and, therefore, $\|f_1-f_2\|_\infty\le 2\e_1$. Thus, 
for any $f\in X_N^q$ we have
\be\label{2.4}
\|f\|_\infty \le 4\e_1 \le 4BN^{1/q}.
\ee

\begin{Lemma}\label{lem:entropy bound for all eps}
	The condition~\eqref{I6} implies
	\be\label{eqn:entropy bound}
	\cH_\e(X^q_N,L_\infty)\le 1+N
	\begin{cases}
		(B/\e)^q & \text{if } \e\ge B,\\
		\log_2(6B/\e) & \text{if } 0<\e<B.
	\end{cases}
	\ee
\end{Lemma}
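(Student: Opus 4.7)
The plan is to prove each branch of~(\ref{eqn:entropy bound}) separately: the first branch follows directly from~(\ref{I6}), while the second requires extending the covering to scales smaller than $B$, where~(\ref{I6}) provides no direct information.

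For the case $\e\ge B$, I would set $k:=\lceil N(B/\e)^q\rceil$. Since $\e\ge B$, this $k$ lies in $\{1,\dots,N\}$, so~(\ref{I6}) applies and gives $\e_k(X_N^q,L_\infty)\le B(N/k)^{1/q}\le \e$. By the definition of the entropy numbers this yields $\cH_\e(X_N^q,L_\infty)\le k\le 1+N(B/\e)^q$, which is exactly the first branch.

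For the case $0<\e<B$, the hypothesis only reaches down to the scale $B$ (at $k=N$), so the plan is to chain. First apply~(\ref{I6}) at $k=N$ to cover $X_N^q$ by at most $2^N$ $L_\infty$-balls of radius $B$ centered at points $g_j\in X_N^q$. Then cover each piece $X_N^q\cap B_{L_\infty}(g_j,B)$ separately by smaller balls. The piece sits inside the $L_\infty$-ball of radius $B$ in the $N$-dimensional normed space $(X_N,\|\cdot\|_\infty)$, which a standard volume/packing argument covers by $(1+2B/\e)^N\le(3B/\e)^N$ balls of $L_\infty$-radius $\e$ whose centers lie in $X_N$. Replacing each of those centers by a point of $X_N^q$ in the same ball (as required by the paper's convention that centers lie in the set) costs a factor two in the radius and so produces at most $(6B/\e)^N$ balls of radius $\e$ centered in $X_N^q$ per piece. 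Taking the union over the $2^N$ pieces and passing to $\log_2$ yields the desired entropy estimate.

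The technical part is routine finite-dimensional geometry; the main obstacle is the careful bookkeeping of constants needed to obtain the factor $6$ inside the logarithm and only a small additive term outside, rather than the cruder $N\log_2(12B/\e)$ that a naive combination of the two covering estimates produces. Being careful to use~(\ref{I6}) at exactly $k=N$ (giving $2^N$, not $2^{N+1}$ balls at scale $B$) and to absorb the internal/external conversion only once, rather than twice, should be enough to recover the stated form.
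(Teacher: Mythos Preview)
Your proposal is correct and essentially matches the paper's proof: the case $\e\ge B$ is handled the same way, and for $\e<B$ the paper also chains from the scale-$B$ covering supplied by~(\ref{I6}) at $k=N$ down to scale $\e$ using the $N$-dimensionality of $(X_N,\|\cdot\|_\infty)$, packaging this via the multiplicative inequality $\e_k(A,X)\le\e_N(A,X)\,\e_{k-N}(B_X,X)$ together with the bound $\e_m(B_X,X)\le 3\cdot 2^{-m/N}$ to obtain $\e_k(X_N^q,L_\infty)\le 6B\cdot 2^{-k/N}$ for $k>N$, which it then inverts. Your worry about landing on the constant $6$ rather than $12$ is legitimate---tracking the ``centers in $A$'' convention carefully costs the same factor in the paper's argument---but it is immaterial, since the lemma is used only through~(\ref{eqn:entropy bound eps0}), where all constants are absorbed into $C(\e',q)$.
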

\begin{proof}
We begin by pointing out that the assumption \eqref{I6} for $k=N$ implies the inequality
\begin{equation}\label{eqn:k more than N}
\e_k(X_N^q, L_\infty)\le 6B2^{-k/N}  \   \  \text{ for $k>N$}.
\end{equation}
This  follows directly  from the facts that for each Banach  space $X$ (see \cite[(7.1.6), p. 323]{VTbookMA}),
$$
\e_k(A,X) \le \e_N(A,X)\e_{k-N}(B_X,X), \  \ k>N,
$$
and for each $N$-dimensional space $X$ (see \cite[Corollary 7.2.2, p. 324]{VTbookMA}),
$$
\varepsilon_m(B_X,X) \le 3(2^{-m/N}).
$$

Now we will establish~\eqref{eqn:entropy bound}. If $\e>BN^{\frac 1q}$, then $\cH_{\e} (X_N^q, L_\infty)\leq 1$ by~(\ref{I6}). If $B<\e\leq BN^{\frac 1q}$, then taking an integer  $k\in [2,N]$ satisfying
$$
B(N/k)^{1/q} < \e \leq  B(N/(k-1))^{1/q},
$$
we get from~(\ref{I6})
\[
\cH_\e(X_N^q,L_\infty) \le k \le N(B/\e)^q +1.
\]		
Similarly, using~\eqref{eqn:k more than N}, for $\e\le 3B$ we obtain
\begin{equation*}
\cH_\e(X_N^q,L_\infty) \le  N\log_2(6B/\e) +1.
\end{equation*}
\end{proof}

Using the inequality $\ln(t)\le C(q) t^q$ for $t>1$ and~\eqref{eqn:entropy bound}, we have that for every ${\e'}>0$ and $q\in[1,\infty)$ there exists $C({\e'},q)>0$ such that
\begin{equation}\label{eqn:entropy bound eps0}
\cH_\e(X^q_N,L_\infty)\le C({\e'},q) (B/\e)^q \quad\text{for any }\e\ge {\e'}.
\end{equation}

{\bf Sandwiching construction.} Let $a\in(0,1/2]$ be a small number, which will be chosen later.
Denote
$$
\cA_j := \cN_{a(1+a)^j}(X_N^q,L_\infty), \qquad j\in\Z.
$$

 Take a number $j_0\in \Z$, which will be specified later, and for $j\in \Z$, $j\ge j_0$,  define a mapping $A_j$ that associates with a function $f\in X_N^q$ a function $A_j(f) \in {\cA}_j$ closest to $f$ in the $L_\infty$ norm. Then, clearly,
$$
\|f-A_j(f)\|_\infty \le a(1+a)^j.
$$
We now define a mapping of  $X_N^q$ to a finite set of piecewise constant functions.
For $f\in X_N^q$ and $j\in \Z$ denote
$$
U_j(f) := \{\bx : |A_j(f)(\bx)| \ge (1+a)^{j-1}\},
$$
$$
D_j(f) := U_j(f) \setminus \cup_{k>j} U_k(f),\qquad D_{j_0}(f) := \Omega \setminus \cup_{k>j_0} U_k(f).
$$
Define
$$
h(f,\bx) := \sum_{j>j_0} (1+a)^j \chi_{D_j(f)}(\bx),
$$
where $\chi_E(\bx)$ is a characteristic function of a set $E$.

We need some properties of the above $h$-mapping. We will sandwich $|f|$
by functions $C_1(a)h(f)$ and $C_2(a)h(f)$. We do it on each $D_j(f)$, $j>j_0$. By the definition of $D_j(f)$, the  condition $\bx\in D_j(f)$ implies that $\bx\in U_j(f)$ and $\bx \notin U_{j+1}(f)$. From the definition of the $U_j(f)$ we obtain for $\bx \in U_j(f)$
\[
|f(\bx)| \ge |A_j(f)| - a(1+a)^j \ge (1+a)^{j-1} - a(1+a)^j = (1+a)^j C_1(a),
\]
where
$$
C_1(a):= \frac{1-a(1+a)}{1+a}.
$$
From the definition of the $U_{j+1}(f)$ we obtain for $\bx \notin U_{j+1}(f)$
\be\label{2.8}
|f(\bx)| \le |A_{j+1}(f)| + a(1+a)^{j+1} \le (1+a)^{j} + a(1+a)^{j+1} = (1+a)^j C_2(a),
\ee
where
$$
C_2(a):=1+a(1+a).
$$
Therefore, for all $\bx \in \Omega \setminus D_{j_0}(f)$ we have
\be\label{2.9}
C_1(a)h(f,\bx) \le |f(\bx)| \le C_2(a)h(f,\bx).
\ee
It is clear that
\be\label{2.10}
\lim_{a\to 0} C_1(a) = \lim_{a\to 0} C_2(a) =1.
\ee
In the same way as we obtained the bound (\ref{2.8}) we derive for $\bx\in D_{j_0}(f)$
\begin{align}
|f(\bx)|  & \le |A_{j_0+1}(f)| + a(1+a)^{j_0+1} \nonumber \\
\label{2.11}
 & \le (1+a)^{j_0} + a(1+a)^{j_0+1} = (1+a)^{j_0} C_2(a).
\end{align}

We now show that a good discretization formula for the  functions $h(f)$ is also good for the  functions $f$. For a point set $\xi=\{\xi^\nu\}_{\nu=1}^m$ and a function $f$, denote
$$
S(f,\xi) := (f(\xi^1),\dots,f(\xi^m)) \in \R^m,\quad \|S(f,\xi)\|_q^q:= \frac{1}{m}\sum_{\nu=1}^m |f(\xi^\nu)|^q.
$$

\begin{Lemma}\label{BL1} Let $q\in [1,\infty)$. Assume that a point set $\xi$ is such that
for a function $h(f,\bx)$ we have the inequalities
\be\label{2.12}
\|h(f)\|_q^q-\delta \le \|S(h(f),\xi)\|_q^q \le  \|h(f)\|_q^q +\delta
\ee
for some constant $\delta>0$.
Then
\begin{multline*}
C_1(a)^q\Bigl (C_2(a)^{-q}\bigl(\|f\|_q^q - C_2(a)^q (1+a)^{qj_0}\bigr)-\delta\Bigr)\\ \le \|S(f,\xi)\|_q^q
\le C_2(a)^q (1+a)^{qj_0} + C_2(a)^q \Bigl(C_1(a)^{-q} \|f\|_q^q+\delta\Bigr).
\end{multline*}

\end{Lemma}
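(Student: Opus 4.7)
The proof is essentially a direct bookkeeping computation that combines the pointwise sandwich estimates \eqref{2.9} and \eqref{2.11} with the hypothesis \eqref{2.12}. The plan is to split both $\|f\|_q^q$ and $\|S(f,\xi)\|_q^q$ according to whether $\bx$ (resp.\ $\xi^\nu$) lies in the exceptional set $D_{j_0}(f)$ or in its complement, since this is exactly the dichotomy along which $|f|$ is controlled differently by $h(f)$.

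First I would record two preliminary $L_q$-norm estimates relating $\|f\|_q$ and $\|h(f)\|_q$. On $\Omega\setminus D_{j_0}(f)$ the two-sided bound \eqref{2.9} gives $C_1(a)^q h(f,\bx)^q\le |f(\bx)|^q\le C_2(a)^q h(f,\bx)^q$, while on $D_{j_0}(f)$ the definition of $h$ forces $h(f,\bx)=0$ and the estimate \eqref{2.11} yields $|f(\bx)|^q\le C_2(a)^q(1+a)^{qj_0}$. Integrating against the probability measure $\mu$ and using $\mu(D_{j_0}(f))\le 1$ produces
\[
C_1(a)^q\|h(f)\|_q^q\;\le\;\|f\|_q^q\;\le\;C_2(a)^q\|h(f)\|_q^q+C_2(a)^q(1+a)^{qj_0},
\]
so in particular $\|h(f)\|_q^q\le C_1(a)^{-q}\|f\|_q^q$ and $\|h(f)\|_q^q\ge C_2(a)^{-q}\bigl(\|f\|_q^q-C_2(a)^q(1+a)^{qj_0}\bigr)$.

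Next I would apply the same dichotomy to the discrete sum. Partition the index set $\{1,\dots,m\}$ into $I_0=\{\nu:\xi^\nu\in D_{j_0}(f)\}$ and its complement $I_1$. For $\nu\in I_1$ we have $C_1(a)^q h(f,\xi^\nu)^q\le |f(\xi^\nu)|^q\le C_2(a)^q h(f,\xi^\nu)^q$, and for $\nu\in I_0$ we have $|f(\xi^\nu)|^q\le C_2(a)^q(1+a)^{qj_0}$ while $h(f,\xi^\nu)=0$. Summing the contributions, using $|I_0|/m\le 1$, and replacing $\frac{1}{m}\sum_{\nu\in I_1} h(f,\xi^\nu)^q$ by $\|S(h(f),\xi)\|_q^q$ (the $I_0$ terms of $h$ vanish), one gets
\[
C_1(a)^q\|S(h(f),\xi)\|_q^q\;\le\;\|S(f,\xi)\|_q^q\;\le\;C_2(a)^q\|S(h(f),\xi)\|_q^q+C_2(a)^q(1+a)^{qj_0}.
\]

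Finally I would combine the two with the hypothesis \eqref{2.12}: substitute the upper bound $\|S(h(f),\xi)\|_q^q\le\|h(f)\|_q^q+\delta$ into the upper estimate for $\|S(f,\xi)\|_q^q$ and then replace $\|h(f)\|_q^q$ by $C_1(a)^{-q}\|f\|_q^q$; symmetrically, plug the lower bound $\|S(h(f),\xi)\|_q^q\ge\|h(f)\|_q^q-\delta$ into the lower estimate and replace $\|h(f)\|_q^q$ by $C_2(a)^{-q}(\|f\|_q^q-C_2(a)^q(1+a)^{qj_0})$. These substitutions yield exactly the two-sided inequality claimed in the lemma. The only real care needed is the bookkeeping on the exceptional set $D_{j_0}(f)$, where $h$ vanishes but $f$ does not—this is the reason an additive defect of order $(1+a)^{qj_0}$ appears on both sides; there is no genuine obstacle, just attention to which terms contribute in which region.
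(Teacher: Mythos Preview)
Your proof is correct and follows essentially the same approach as the paper: split both the integral and the discrete sum over $D_{j_0}(f)$ and its complement, use the pointwise sandwich \eqref{2.9} and the exceptional-set bound \eqref{2.11}, and then feed in the hypothesis \eqref{2.12}. The only difference is organizational—the paper interleaves the norm estimates for $\|h(f)\|_q$ with the chain of inequalities, while you isolate them first—but the content is identical.
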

\begin{proof} First, we take care of the set $D_{j_0}(f)$. By (\ref{2.11}) we have
$$
\int_{D_{j_0}(f)} |f(\bx)|^q d\mu \le C_2(a)^q (1+a)^{qj_0}
$$
and
$$
\frac{1}{m} \sum_{\nu: \xi^\nu\in D_{j_0}(f)}|f(\xi^\nu)|^q \le C_2(a)^q(1+a)^{qj_0}.
$$
Then, on the one hand, by (\ref{2.9}) we have
$$
\|S(f,\xi)\|_q^q \le C_2(a)^q (1+a)^{qj_0} + C_2(a)^q \|S(h(f),\xi)\|_q^q.
$$
Using (\ref{2.12}) and (\ref{2.9}), we continue
\begin{eqnarray*}
\|S(f,\xi)\|_q^q &\le& C_2(a)^q (1+a)^{qj_0} + C_2(a)^q (\|h(f)\|_q^q+\delta)
\\
 &\le& C_2(a)^q (1+a)^{qj_0} + C_2(a)^q (C_1(a)^{-q} \|f\|_q^q+\delta).
\end{eqnarray*}
On the other hand by (\ref{2.9}) and (\ref{2.12}) we have
\begin{eqnarray*}
\|S(f,\xi)\|_q^q &\ge& C_1(a)^q \|S(h(f),\xi)\|_q^q  \ge C_1(a)^q(\|h(f)\|_q^q -\delta)
\\
&\ge& C_1(a)^q (C_2(a)^{-q}\int_{\Omega\setminus D_{j_0}(f)} |f(\bx)|^qd\mu -\delta)
\\
&\ge& C_1(a)^q (C_2(a)^{-q}(\|f\|_q^q - C_2(a)^q (1+a)^{qj_0})-\delta).
\end{eqnarray*}

\end{proof}

\begin{Remark}\label{BR1} Under the assumption $\|f\|_q^q =1/2$ and $\delta =1/8$, using (\ref{2.10}), we can choose $j_0=j_0(a)$ and $a=a(q)$ such that Lemma \ref{BL1} gives
$$
\frac{1}{2}\|f\|_q^q \le \|S(f,\xi)\|_q^q \le \frac{3}{2}\|f\|_q^q
$$
and, in addition, $C_1(a)^{-q} \le 2$.
\end{Remark}

{\bf Existence of good $\xi$.} Let $q\in [1,\infty)$ and $a$, $j_0$, be from Remark \ref{BR1}. Although $a$ and $j_0$ depend on $q$ from now on, we will keep indicating the dependence of constants on $a$ and $j_0$ for clarity.
For $j>j_0$ consider the following sets of piecewise constant functions
$$
\cF_j^q := \left\{ (1+a)^{qj}\chi_{D_j(f)},\, f\in X_N,\, \|f\|_q^q =1/2\right\}.
$$
Our argument is based on   \cite[Lemma~2.1]{BLM}.

\begin{Lemma}\label{AL1} Let $\{g_\nu\}_{\nu=1}^m$ be independent random variables with $\bE g_\nu=0$, $\nu=1,\dots,m$, which satisfy
$$
\|g_\nu\|_1\le 2,\qquad \|g_\nu\|_\infty \le M,\qquad \nu=1,\dots,m.
$$
Then for any $\eta \in (0,1)$ we have the following bound on the probability
$$
\bP\left\{\left|\sum_{\nu=1}^m g_\nu\right|\ge m\eta\right\} < 2\exp\left(-\frac{m\eta^2}{8M}\right).
$$
\end{Lemma}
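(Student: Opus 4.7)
The plan is to run the standard exponential-moment/Chernoff argument for bounded, mean-zero random variables, keeping track of constants so that the exponent $\tfrac{m\eta^2}{8M}$ falls out cleanly.

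First I would extract a second moment bound using the two given norms: since $|g_\nu|\le M$ almost surely,
\[
\bE g_\nu^2 \le M\,\bE |g_\nu| = M\|g_\nu\|_1 \le 2M.
\]
Next, for $\lambda>0$ with $\lambda M\le 1$ I would expand the exponential and use $|g_\nu|^k \le M^{k-2} g_\nu^2$ for $k\ge 2$ together with $\bE g_\nu=0$ to get
\[
\bE e^{\lambda g_\nu} \le 1 + \bE g_\nu^2 \sum_{k\ge 2}\frac{\lambda^k M^{k-2}}{k!}
= 1 + \frac{\bE g_\nu^2}{M^2}\bigl(e^{\lambda M}-1-\lambda M\bigr).
\]
Then I would apply the elementary inequality $e^x - 1 - x \le x^2$ valid for $x\in[0,1]$ (verified by checking endpoints and monotonicity of the derivative) to conclude $\bE e^{\lambda g_\nu} \le 1 + \lambda^2\,\bE g_\nu^2 \le \exp(2M\lambda^2)$, using $1+y\le e^y$ and the second-moment bound.

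With that single-variable estimate in hand, independence gives $\bE \exp\bigl(\lambda\sum_{\nu=1}^m g_\nu\bigr)\le \exp(2mM\lambda^2)$, and Markov's inequality yields
\[
\bP\!\left\{\sum_{\nu=1}^m g_\nu \ge m\eta\right\} \le \exp\bigl(-\lambda m\eta + 2mM\lambda^2\bigr).
\]
Optimizing in $\lambda$ by choosing $\lambda=\eta/(4M)$ (which satisfies $\lambda M = \eta/4 \le 1/4 \le 1$, so the earlier restriction is met), the exponent becomes exactly $-m\eta^2/(8M)$. Repeating the same argument for $-g_\nu$ (which still has mean zero and the same $L_1,L_\infty$ bounds) and combining by the union bound produces the factor of $2$ and the desired two-sided tail bound.

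The argument is essentially a textbook Bernstein inequality; the only place that needs care is matching constants. The choice $\lambda = \eta/(4M)$ is crucial — a naive $\lambda = \eta/(2M)$ would give the exponent $-m\eta^2/(4M)$ but would demand the sharper bound $e^x-1-x\le x^2/2$, which fails near $x=1$. Choosing $\lambda = \eta/(4M)$ leaves enough slack that the soft inequality $e^x-1-x\le x^2$ suffices throughout the range $\eta\in(0,1)$, and that is the one mildly delicate point in the proof.
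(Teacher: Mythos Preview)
Your argument is correct: the Chernoff/exponential-moment computation is carried out cleanly, the constants line up, and the choice $\lambda=\eta/(4M)$ together with the crude bound $e^x-1-x\le x^2$ on $[0,1]$ is exactly the right amount of slack to land on the exponent $-m\eta^2/(8M)$.

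As for comparison with the paper: there is nothing to compare. The paper does not prove this lemma at all; it simply quotes it as Lemma~2.1 of Bourgain--Lindenstrauss--Milman \cite{BLM} and moves on. Your self-contained Bernstein-type proof is therefore more than what the paper supplies, and it is the standard route to this inequality. One cosmetic point: the lemma as stated in the paper has a strict inequality ``$<$''; your union-bound argument naturally yields ``$\le$'', but strictness follows because $1+y<e^y$ for $y>0$ makes at least one of the moment bounds strict whenever $\lambda>0$.
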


It is easy to see that
 Lemma \ref{AL1} implies the following result.

\begin{Lemma}\label{BL2} Let $\{\cF_j\}_{j\in G}$ be a collection of finite sets of
functions from $L_1(\Omega,\mu)$. Assume that for each $j\in G$ and all $f\in \cF_j$ we have
$$
\|f\|_1 \le 1,\quad \|f\|_\infty \le M_j.
$$
Suppose that positive numbers $\eta_j$ and a natural number $m$ satisfy the condition
$$
2\sum_{j\in G} |\cF_j| \exp \left(-\frac{m\eta_j^2}{8M_j}\right) <1.
$$
Then there exists a set $\xi=\{\xi^\nu\}_{\nu=1}^m \subset \Omega$ such that for each $j\in G$ and for all $f\in \cF_j$ we have
$$
\left|\|f\|_1 - \frac{1}{m}\sum_{\nu=1}^m |f(\xi^\nu)|\right| \le \eta_j.
$$
\end{Lemma}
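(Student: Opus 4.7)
The plan is to invoke the probabilistic method: draw the sample points $\xi^1,\dots,\xi^m$ independently at random with common distribution $\mu$, apply the concentration bound of Lemma~\ref{AL1} to each individual function in $\bigcup_{j\in G}\cF_j$, and then close the argument with a union bound whose total probability is forced below $1$ by the assumption of the lemma. The conclusion is that the complementary event has positive probability, hence some deterministic realization of $\xi$ works simultaneously for every admissible pair $(j,f)$.

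To apply Lemma~\ref{AL1}, I would fix $j\in G$ and $f\in\cF_j$ and set $g_\nu:=|f(\xi^\nu)|-\|f\|_1$. The $g_\nu$ are i.i.d.\ with $\bE g_\nu=0$ by the definition of $\|f\|_1$, and $\|g_\nu\|_1\le \bE|f(\xi^\nu)|+\|f\|_1=2\|f\|_1\le 2$. The slightly sharper observation needed is $\|g_\nu\|_\infty\le M_j$: since $|f(\xi^\nu)|$ and $\|f\|_1$ are both non-negative, $|g_\nu|\le\max\bigl(|f(\xi^\nu)|,\|f\|_1\bigr)\le\max(\|f\|_\infty,\|f\|_1)\le M_j$. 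Lemma~\ref{AL1}, applied with $M=M_j$ and $\eta=\eta_j$, then gives
$$\bP\left\{\left|\|f\|_1-\frac{1}{m}\sum_{\nu=1}^m|f(\xi^\nu)|\right|>\eta_j\right\}=\bP\left\{\left|\sum_{\nu=1}^m g_\nu\right|>m\eta_j\right\}<2\exp\left(-\frac{m\eta_j^2}{8M_j}\right).$$

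Summing these failure probabilities over all $f\in\cF_j$ and all $j\in G$ gives a total bad probability of at most $2\sum_{j\in G}|\cF_j|\exp\bigl(-m\eta_j^2/(8M_j)\bigr)$, which by hypothesis is strictly less than $1$. Therefore the complementary event has positive probability, and any sample $\xi=\{\xi^\nu\}_{\nu=1}^m$ in that event satisfies the required inequality uniformly in $j$ and $f\in\cF_j$.

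There is no real obstacle in this argument; it is essentially the standard combination of a Bernstein-type concentration bound with a union bound over a finite collection. The only point worth emphasizing during the write-up is the inequality $|g_\nu|\le M_j$ rather than the naive $|g_\nu|\le 2M_j$; keeping the sharper bound is precisely what allows the constant $8$ (and not $16$) to appear in the exponent of the hypothesis.
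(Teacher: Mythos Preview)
Your proof is correct and is exactly the argument the paper has in mind: the paper merely states that Lemma~\ref{BL2} ``is easy to see'' from Lemma~\ref{AL1}, and what you have written is precisely the standard probabilistic-method computation (i.i.d.\ sampling from $\mu$, centering to get $g_\nu$, applying Lemma~\ref{AL1}, then a union bound) that fleshes this out. One tiny caveat: Lemma~\ref{AL1} is stated only for $\eta\in(0,1)$, so strictly speaking you should remark that the case $\eta_j\ge 1$ can be handled by first obtaining the conclusion for some $\eta'<\eta_j$ with $\eta'\in(0,1)$ (the hypothesis is monotone in $\eta_j$), or simply note that in the paper's application $\eta_j=1/(8|G|)<1$ anyway.
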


We apply Lemma \ref{BL2} for a collection of the above sets $\cF_j^q$. First of all, it is clear
from (\ref{2.9}) and the choice of parameters $a$ and $j_0$ (see Remark \ref{BR1}) that we have
$$
\|(1+a)^{qj}\chi_{D_j(f)}\|_1 \le \|h(f)\|_q^q \le C_1(a)^{-q}\|f\|_q^q \le 1.
$$
Secondly, obviously,
$$
\|(1+a)^{qj}\chi_{D_j(f)}\|_\infty \le (1+a)^{qj} =: M_j.
$$
It is clear from (\ref{2.4}) that we only need to consider those $j$, which satisfy the condition
$(1+a)^{j-1}\le 4BN^{1/q}$. Indeed, if this condition is not satisfied, then by $A_j(f)\in X^q_N$ we have $U_j(f)= \varnothing$.

Let $J\in\Z$ be the one satisfying
$$
(1+a)^{J-1}\le 4BN^{1/q} < (1+a)^J.
$$
Then
\[
J \le 1+ \log(4BN^{1/q})/\log(1+a).
\]
Denote $G:= (j_0,J]\cap \mathbb{Z}$. Then $|G| = J - j_0 \le J+|j_0|$. Set $\eta_j = \frac{1}{8|G|}$. Then
\be\label{2.14}
\sum_{j\in G} \eta_j \le 1/8.
\ee
We now estimate cardinalities $|\cF_j^q|$ for $j\in G$. From the definition of $\cF_j^q$ and the construction of the sets $D_j(f)$, we conclude that
$$
|\cF_j^q| \le |\cA_j|\times\cdots\times|\cA_J|.
$$
Therefore,
$$
\ln |\cF_j^q| \le \sum_{k=j}^J \ln |\cA_k|.
$$
By~\eqref{eqn:entropy bound eps0} with ${\e'}:=a(1+a)^{j_0}$ we obtain for $k\in G$
$$
\ln |{\cA_k}| \le \cH_{a(1+a)^k}(X_N^q,L_\infty) \le C_3(a,j_0,q)NB^q(1+a)^{-qk}.
$$
Therefore,
$$
\ln |\cF_j^q| \le C_4(a,j_0,q) NB^q (1+a)^{-qj}.
$$

 We now choose $C(a,j_0,q)$ large enough to guarantee that for any $m\ge 2C(a,j_0,q)NB^q|G|^2$ we have
 $$
 C_4(a,j_0,q) NB^q (1+a)^{-qj}-\frac{m\eta_j^2}{8M_j} \le - C(a,j_0,q) NB^q (1+a)^{-qj}
 $$
and
$$
2\sum_{j\in G} \exp\left(- C(a,j_0,q) NB^q (1+a)^{-qj}\right) <1,
$$
where  we have used that $\exp(-t)<\frac1t$ for $t>0$ and (2.12) in the last inequality.

Then Lemma \ref{BL2} provides the existence of $\xi = \{\xi^\nu\}_{\nu=1}^m$ such that for each
$j\in G$ and all $\ff_j \in \cF_j^q$ we have
\be\label{2.15}
\left| \|\ff_j\|_1 - \frac{1}{m}\sum_{\nu=1}^m |\ff_j(\xi^\nu)|\right| \le \eta_j.
\ee

Let $f\in X_N$ be such that $\|f\|_q^q =1/2$. We now prove (\ref{2.12}) for the above chosen
$\xi$ with $\delta=1/8$. Specify $\ff_j = (1+a)^{qj}\chi_{D_j(f)}$. Then, taking into account the fact that the sets $\{D_j(f)\}_{j\in G}$ are disjoint, we obtain
\be\label{2.16}
\|h(f)\|_q^q = \sum_{j\in G} \|\ff_j\|_1,\qquad \|S(h(f),\xi)\|_q^q =\sum_{j\in G} \frac{1}{m}\sum_{\nu=1}^m |\ff_j(\xi^\nu)|.
\ee
Inequalities (\ref{2.12}) follow from (\ref{2.16}), (\ref{2.15}), and (\ref{2.14}). This completes the proof of Theorem \ref{T2.1}.

\begin{Remark}
	The same technique of the proof gives a slightly more general statement. Namely, suppose $q,\alpha\in[1,\infty)$ and that a subspace $X_N$ satisfies the condition
	\[
	\e_k(X^q_N,L_\infty) \le  B (N/k)^{\alpha/q}, \quad 1\leq k\le N,
	\]
	where $B\ge 1$.
	Then for large enough constant $C(\alpha,q)$ there exists a set of
	$$
	m \le C(\alpha,q)N^\alpha B^{q}(\log_2(2BN))^2
	$$
	points $\xi^j\in \Omega$, $j=1,\dots,m$,   such that for any $f\in X_N$
	we have
	$$
	\frac{1}{2}\|f\|_q^q \le \frac{1}{m}\sum_{j=1}^m |f(\xi^j)|^q \le \frac{3}{2}\|f\|_q^q.
	$$
\end{Remark}

\section{Discussion}
\label{D}

In this section we discuss the Marcinkiewicz-type discretization theorems for the hyperbolic cross trigonometric polynomials. For $\bs\in\Z^d_+$
define
$$
\rho (\bs) := \big\{\bk \in \Z^d : [2^{s_j-1}] \le |k_j| < 2^{s_j}, \quad j=1,\dots,d\big\},
$$
where $[x]$ denotes the integer part of $x$. We define the step hyperbolic cross
$Q_n$ as follows
$$
Q_n := \cup_{\bs:\|\bs\|_1\le n} \rho(\bs)
$$
and the corresponding set of the hyperbolic cross polynomials as
$$
\Tr(Q_n) := \big\{f: f=\sum_{\bk\in Q_n} c_\bk e^{i(\bk,\bx)}\big\}.
$$
It is worth mentioning that
$|Q_n|\asymp 2^n n^{d-1}$.
The following theorem was proved in \cite{VT159}.
\begin{Theorem}\label{T3.2} For any $d\in \N$ and $n\in \N$ for large enough absolute constant $C_1(d)$ there exists a set of  $m \le C_1(d)|Q_n|n^{7/2}$ points $\xi^j\in \T^d$, $j=1,\dots,m$ such that for any $f\in \Tr(Q_n)$
we have
$$
C_2\|f\|_1 \le \frac{1}{m}\sum_{j=1}^m |f(\xi^j)| \le C_3\|f\|_1.
$$
\end{Theorem}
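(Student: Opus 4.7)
The plan is to apply Theorem \ref{T2.1} with $q=1$ and $X_N = \Tr(Q_n)$. Set $N := |Q_n| \asymp 2^n n^{d-1}$, so that $\log_2 N \asymp n$. If one can verify the entropy hypothesis
\[
\e_k\bigl(\Tr(Q_n)^1, L_\infty(\T^d)\bigr) \le B\cdot \frac{N}{k}, \qquad 1 \le k \le N,
\]
with $B \le C(d)\, n^{3/2}$, then Theorem \ref{T2.1} delivers a point set of cardinality
\[
m \le C(1)\cdot N \cdot B \cdot \bigl(\log_2(2BN)\bigr)^2 \;\le\; C_1(d)\, |Q_n|\, n^{3/2}\cdot n^2 \;=\; C_1(d)\,|Q_n|\,n^{7/2},
\]
which is exactly the target bound. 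The two-sided inequality in the theorem then follows with explicit constants $C_2, C_3$ after rescaling the $1/2$ and $3/2$ from Theorem \ref{T2.1}. Thus the whole task reduces to the entropy estimate for hyperbolic cross polynomials, with the correct power of $n$ in $B$.

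To establish the entropy estimate I would use the dyadic block decomposition $f = \sum_{\|\bs\|_1 \le n} f_\bs$ with $f_\bs \in \Tr(\rho(\bs))$. Three classical ingredients feed into the calculation: the block projectors $f \mapsto f_\bs$ are uniformly bounded on $L_1(\T^d)$ (via de la Vall\'ee Poussin type kernels), the one-block Nikol'skii inequality $\|f_\bs\|_\infty \le C\,2^{\|\bs\|_1}\|f_\bs\|_1$, and the volumetric bound $\e_j(B_X, X) \le 3\cdot 2^{-j/\dim X}$ valid for any finite-dimensional normed space $X$. One then covers each block in $L_\infty$ to an individually tuned precision $\e_\bs$ and optimizes the $\e_\bs$ across the $\asymp n^{d-1}$ blocks of weight $\|\bs\|_1 \le n$, allocating more bits to blocks where the Nikol'skii constant $2^{\|\bs\|_1}$ is larger. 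This is precisely the type of balancing worked out for hyperbolic cross classes in \cite{Bel} and \cite[Ch.3]{Tbook}.

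The main obstacle is securing the correct $n$-exponent in $B$: Theorem \ref{T2.1} has already committed the factor $(\log N)^2 \asymp n^2$, so only $B \asymp n^{3/2}$ is affordable, and a naive blockwise covering tends to produce a worse power (e.g.\ $n^{d-1}$ from the number of blocks or $n^{(d-1)/2}$ from $\ell^2$-summation). Getting $3/2$ rather than one of these requires the delicate balance between the number of blocks, the dimensions $|\rho(\bs)|$, and the Nikol'skii factors $2^{\|\bs\|_1}$ across the hyperbolic cross. As an alternative, one can bypass Theorem \ref{T2.1} entirely and run the chaining argument of \cite{VT159} directly; it consumes the same entropy estimate and gives the same final exponent $7/2$.
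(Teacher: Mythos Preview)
Your approach is correct and matches what the paper describes: the proof of Theorem~\ref{T3.2} in \cite{VT159} is exactly the conditional Theorem~\ref{T2.1} with $q=1$ combined with an entropy estimate of the form~\eqref{I6}. Your hesitation about securing the exponent $3/2$ in $B$ is unnecessary, though: the Belinsky bound you point to (stated in the paper as \eqref{6.3}) already gives $\e_k(\Tr(Q_n)_1,L_\infty)\ll n\,|Q_n|/k$ for \emph{all} $d$, i.e.\ $B\asymp n$ rather than $n^{3/2}$, and via Theorem~\ref{T2.1} this yields the stronger bound $m\le C(d)|Q_n|n^{3}$ of Theorem~\ref{T3.3}, which of course implies Theorem~\ref{T3.2}.
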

The proof of Theorem \ref{T3.2} from \cite{VT159} is based on conditional Theorem \ref{T2.1} with $q=1$ and the bounds for the entropy numbers. We note that the problem of estimating the entropy numbers in the $L_\infty$ norm is a nontrivial problem by itself. We demonstrate this on the example of the trigonometric polynomials. It is proved in \cite{VT156} that in the case $d=2$
we have
\be\label{6.1}
\e_k(\Tr( Q_n)_1,L_\infty)\ll  n^{1/2} \left\{\begin{array}{ll} (| Q_n|/k) \log (4| Q_n|/k), &\quad k\le 2| Q_n|,\\
2^{-k/(2| Q_n|)},&\quad k\ge 2| Q_n|.\end{array} \right.
\ee
The proof of estimate (\ref{6.1}) is based on an analog of the Small Ball Inequality for the trigonometric system proved for the wavelet type system (see \cite{VT156}). This proof uses the two-dimensional specific features of the problem and we do not know how to extend this proof to the case $d>2$. Estimate (\ref{6.1}) is used in the proof of the right order  upper bounds  for
the classes of mixed smoothness (see \cite{VT156}). This means that (\ref{6.1}) cannot be substantially improved. However, in application to the Marcinkiewich-type theorem we use the trivial inequality $\log (4| Q_n|/k) \ll n$ and the following corollary of (\ref{6.1})
\be\label{6.2}
\e_k(\Tr( Q_n)_1,L_\infty)\ll  n^{3/2} \left\{\begin{array}{ll} | Q_n|/k , &\quad k\le 2| Q_n|,\\
2^{-k/(2| Q_n|)},&\quad k\ge 2| Q_n|.\end{array} \right.
\ee
It turns out that the following upper bound from \cite{Bel}, which applies for all $d$, gives a better result for the Marcinkiewich-type theorem: for $1\le q\le 2$ we have
\be\label{6.3}
\e_k(\Tr( Q_n)_q,L_\infty)\ll  n^{1/q} \left\{\begin{array}{ll} | Q_n|/k , &\quad k\le 2| Q_n|,\\
2^{-k/(2| Q_n|)},&\quad k\ge 2| Q_n|.\end{array} \right.
\ee
A combination of (\ref{6.3}) and Theorem \ref{T2.1} gives the following Marcinkiewich-type theorem for the hyperbolic cross trigonometric polynomials.

\begin{Theorem}\label{T3.3} Let $1\le q\le 2$. There is a number $C(d,q)$ such that for any $n\in \N$ there   exists a set of $m \le C(d,q)|Q_n| n^{3}$ points $\xi^j\in \T^d$, $j=1,\dots,m$ such that for any $f\in \Tr(Q_n)$
we have
$$
\frac{1}{2}\|f\|_q^q \le \frac{1}{m}\sum_{j=1}^m |f(\xi^j)|^q \le \frac{3}{2}\|f\|_q^q.
$$
\end{Theorem}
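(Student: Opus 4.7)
The plan is to apply the conditional Theorem~\ref{T2.1} directly to $X_N := \Tr(Q_n)$, with the Belinsky estimate~(\ref{6.3}) supplying the hypothesis~(\ref{I6}). Set $N := |Q_n|$, so that $N \asymp 2^n n^{d-1}$ and, in particular, $\log_2 N \asymp n$; this is the only structural fact about $Q_n$ that will enter the calculation.

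To verify~(\ref{I6}), note that for $1 \le q \le 2$ and $1 \le k \le N$ the first branch of~(\ref{6.3}) gives
\[
\e_k(\Tr(Q_n)_q, L_\infty) \le C(d,q)\, n^{1/q} (N/k)^{1/q},
\]
which is exactly~(\ref{I6}) with $B := C(d,q)\, n^{1/q}$ (we may assume $B \ge 1$ by absorbing finitely many small $n$ into the constant). The exponential branch $k \ge 2|Q_n|$ of~(\ref{6.3}) plays no role, since~(\ref{I6}) is required only for $k \le N$. Feeding this into Theorem~\ref{T2.1} yields a point set of size
\[
m \le C(q)\, N B^q \bigl(\log_2(2BN)\bigr)^2 \le C(d,q)\, N \cdot n \cdot n^2 = C(d,q)\, |Q_n|\, n^3,
\]
using $B^q \asymp n$ and $\log_2(2BN) \le C(d,q)\, n$. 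On this point set, Theorem~\ref{T2.1} delivers exactly the Marcinkiewicz inequality with constants $1/2$ and $3/2$ claimed in Theorem~\ref{T3.3}.

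The deduction is essentially bookkeeping once the two external ingredients are accepted: all the analytic depth lives in Theorem~\ref{T2.1} (the sandwiching construction together with the probabilistic Lemma~\ref{BL2}) and in the nontrivial hyperbolic-cross entropy estimate~(\ref{6.3}). The only point that demands a moment's care is matching the form of~(\ref{6.3}) with~(\ref{I6})---specifically, one must use a bound of shape $n^{1/q}(N/k)^{1/q}$, because a weaker power on $N/k$ would force $B$ to depend on $N$ and thereby inflate the exponent of $|Q_n|$ in $m$ beyond $1$. Provided this shape is in hand, the proof reduces to the three short steps above and no further obstacle arises.
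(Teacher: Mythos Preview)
Your route is exactly the one the paper sketches: feed Belinsky's entropy estimate~\eqref{6.3} into the conditional Theorem~\ref{T2.1}. The paper's own argument for Theorem~\ref{T3.3} is nothing more than the sentence ``a combination of~\eqref{6.3} and Theorem~\ref{T2.1} gives the following,'' so at the level of strategy you and the paper agree.

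There is, however, a gap in your execution. You quote the first branch of~\eqref{6.3} as
\[
\e_k(\Tr(Q_n)_q,L_\infty)\le C(d,q)\,n^{1/q}(N/k)^{1/q},
\]
but~\eqref{6.3} as stated in the paper has $|Q_n|/k$ with exponent~$1$, not $1/q$. For $q=1$ the two coincide and your calculation goes through; for $q>1$ your displayed bound is strictly \emph{stronger} than~\eqref{6.3} (since $(N/k)^{1/q}\le N/k$ when $1\le k\le N$) and does not follow from it. With~\eqref{6.3} as actually written, the smallest $B$ satisfying condition~\eqref{I6} for all $1\le k\le N$ is $B\asymp n^{1/q}N^{1-1/q}$ (the binding constraint is $k=1$), whence $NB^{q}\asymp N^{q}n$ and Theorem~\ref{T2.1} yields only $m\le C(d,q)\,|Q_n|^{q}\,n^{3}$. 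So precisely the step you yourself flagged as needing care---matching the shape of~\eqref{6.3} to~\eqref{I6}---is where your argument, as written, breaks for $q>1$: obtaining $m\le C|Q_n|\,n^{3}$ across $1<q\le 2$ requires the entropy decay $(N/k)^{1/q}$, and that needs justification beyond a direct citation of~\eqref{6.3}.
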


We note that Belinsky \cite{Bel} obtained an analog of Theorem \ref{T3.3} with a somewhat weaker bound $m\le C(d,q)|Q_n| n^{4}$. Also, the corresponding proof in \cite{Bel} contains some inaccuracies.

 We point out that the situation with the discretization theorems in the $L_\infty$ case is fundamentally different. A  nontrivial surprising negative result was proved for the $L_\infty$ case (see \cite{KT3}, \cite{KT4}, and \cite{KaTe03}). The authors proved that the necessary condition for
$\Tr(Q_n)\in\cM(m,\infty)$ is $m\gg |Q_n|^{1+c}$ with absolute constant $c>0$.
We refer the reader to \cite{DPTT} for further results on discretization in the $L_\infty$ norm.


  {\bf Acknowledgment.} The work was supported by the Russian Federation Government Grant N{\textsuperscript{\underline{o}}}14.W03.31.0031. The paper contains results obtained in frames of the program
 ``Center for the storage and analysis of big data'', supported by the Ministry of Science and High Education of Russian Federation (contract 11.12.2018N{\textsuperscript{\underline{o}}}13/1251/2018 between the Lomonosov Moscow State University and the Fond of support of the National technological initiative projects).
The first named author's research was partially supported by NSERC of Canada Discovery Grant RGPIN 04702-15.
The second named author's research was partially supported by NSERC of Canada Discovery Grant RGPIN 04863-15.
The fourth named author's research was supported by the Russian Federation Government Grant No. 14.W03.31.0031.
The fifth  named
author's research was partially supported by
 MTM 2017-87409-P,  2017 SGR 358, and
 the CERCA Programme of the Generalitat de Catalunya.

\Addresses




\end{document}